\theoremstyle{plain}%
\newtheorem{theorem}{Theorem}[section]
\newtheorem{lemma}[theorem]{Lemma}
\newtheorem{conjecture}[theorem]{Conjecture}
\newtheorem*{conjecture*}{Conjecture}
\theoremstyle{definition}
\newtheorem{definition}[theorem]{Definition}
\theoremstyle{remark}
\newtheorem{remark}[theorem]{Remark}
\newtheorem{remarks}[theorem]{Remarks}
 \let \leq \leqslant
 \let \geq \geqslant
\DeclareMathOperator{\sgn}{sgn}
\DeclareMathOperator{\Var}{Var}
\title{A tame sequence of transitive Boolean functions}
\author{Malin P. Forsstr\"om}
\address[Malin P. Forsstr\"om]{Department of Mathematics, KTH Royal Institute of Technology, 100 44 Stockholm, Sweden.}
\email{malinpf@kth.se} 
  \keywords{Boolean functions}
  \subjclass[2010]{60K99}
\begin{document}

\begin{abstract}
Given a sequence of Boolean functions \( (f_n)_{n \geq 1} \),  \( f_n \colon \{ 0,1 \}^{n}  \to \{ 0,1 \}\), and a sequence \( (X^{(n)})_{n\geq 1} \) of continuous time \( p_n \)-biased random walks \( X^{(n)} =  (X_t^{(n)})_{t \geq 0}\) on \( \{ 0,1 \}^{n} \), let \( C_n \) be the (random) number of times in \( (0,1) \) at which the process  \( (f_n(X_t))_{t \geq 0} \) changes its value. In~\cite{js2006}, the authors conjectured that if \( (f_n)_{n \geq 1} \) is    non-degenerate, transitive and satisfies \( \lim_{n \to \infty}  \mathbb{E}[C_n] = \infty\), then \( (C_n)_{n \geq 1} \) is not tight. We give an explicit example of a sequence of Boolean functions which disproves   this conjecture.
\end{abstract}

\maketitle

\section{Introduction}

The aim of this paper is to present an example of a sequence of Boolean functions, which show that a conjecture made in~\cite{js2006}, in its full generality, is false. To be able to present this conjecture, we first give some background.

For each \( n \geq 1 \), fix some \( p_n \in (0,1) \) and let \(X^{(n)} = (X_t^{(n)})_{t \geq 0}\) be the continuous time random walk on the \( n \)-dimensional hypercube defined as follows.  For each \( i \in [n] \coloneqq \{1,2, \ldots, n \} \) independently, let \( (X_t^{(n)}(i))_{t \geq 0} \) be the continuous time Markov chain on \( \{ 0,1 \} \) which at random times, distributed according to a rate one Poisson process, is assigned a new value, chosen according to   \(  (1-p_n) \delta_0  + p_n \delta_1   \), independently of the Poisson process. The unique stationary distribution of $(X^{(n)}_t)_{t\geq  0}$, denoted by  \( \pi_n\), is the   measure \( ((1-p_n) \delta_0  + p_n \delta )^{\otimes n} \) on \( \{0,1 \}^n \). Throughout this paper, we will always assume that  \( X_0^{(n)}   \) is chosen with respect to this measure. When  \( t > 0 \) is small, the difference between \( X_0^{(n)} \) and \( X_t^{(n)} \) is often thought of as noise, describing a small proportion \( 1-e^{-t} \approx t\)  of the bits being miscounted or corrupted.

A function \( f_n \colon \{ 0,1 \}^{n} \mapsto \{0,1\} \) will be referred to as a Boolean function.  Some classical examples of Boolean functions are the so called \emph{Dictator function} \( f_n(x) = x(1) \), the \emph{Majority function} \( f_n(x) = \sgn \bigl(\sum_{i=1}^{n} (x(i) - 1/2) \bigr) \) and the \emph{Parity function} \( \sgn  \bigl( \prod_{i=1}^{n} (x(i)-1/2 ) \bigr)\)  (see e.g.~\cite{odonnell, gs}).
 Since it is sometimes not natural to require that a sequence of Boolean functions is defined for each \( n \in \mathbb{N} \), we only require that a sequence of Boolean functions is defined for \( n \) in an infinite sub-sequence of \( \mathbb{N} \). Such sub-sequences of \( \mathbb{N} \) will be denoted by \( \mathcal{N} = \{ n_1,n_2, \ldots \} \), where \( 1 \leq n_1 < n_2 < \ldots \). To simplify notation, whenever we consider the limit of a sequence \( (x_{n_i})_{i \geq 1} \) and the dependency on \( \mathcal{N} \) is clear, we will abuse notation and write \( \lim_{n \to \infty} x_n \) instead of \( \lim_{i \to \infty} x_{n_i}\).  Also, we will write \( (x_n)_{n \in \mathcal{N}} \) instead of \( (x_{n_i})_{i \geq 1} \).

One of the main objectives of~\cite{js2006} was to introduce notation which describes possible behaviours of \( (f_n(X_t^{(n)}))_{t \geq 0} \). Some of these definitions which will be relevant for this paper is given in the following definition.

\begin{definition}
Let \( (f_n)_{n \in \mathcal{N}}\), \( f_n \colon \{ 0,1 \}^{n} \to \{ 0,1 \} \), be a sequence of Boolean functions. For \( n \in \mathcal{N} \), let \( C_n = C_n(f_n) \) denote the (random) number of times in \( (0,1) \) at which \( (f_n(X_t^{(n)}))_{t \geq 0} \) has changed its value, i.e.\ let   \( C_n \coloneqq  \lim_{N \to \infty}  \sum_{i=0}^{N-1} \mathbf{1}({f_n(X^{(n)}_{i/N}) \neq f_n(X^{(n)}_{(i+1)/N})}) \).
The sequence \( (f_n)_{n \in \mathcal{N}} \) is said to be
\begin{enumerate}[(i)]
	\item \emph{lame} if
	\begin{equation*}
		\lim_{n \to \infty} P(C_n=0) = 1,
	\end{equation*}
	\item \emph{tame} if   \( (C_n)_{n\geq 1} \) is tight, that is for every \( \varepsilon > 0 \) there is \( k \geq 1 \) and \( n_0 \geq 1 \) such that 
	\begin{equation*}
		   P(C_n\geq k) < \varepsilon \qquad \forall n \in \mathcal{N} \colon n \geq n_0,
	\end{equation*}
	\item \emph{volatile} if \( C_n \Rightarrow \infty \) in distribution.
\end{enumerate}
\end{definition}

In~\cite{js2006}, the authors showed that a sequence of Dictator functions is tame and that a sequence of Parity functions is volatile, while a sequence of Majority functions is neither tame nor volatile. More generally, the authors also showed that any noise sensitive sequence of Boolean functions (see e.g.~\cite{bks, gs, odonnell}) is volatile, while any sequence of Boolean functions which is lame or tame is noise stable~\cite{bks, gs, odonnell}. As noted in~\cite{f}~and~\cite{pg}, there are many sequences of functions which are both noise stable and volatile, and hence the opposite does not hold.

Given a sequence \( (p_n)_{n \geq 1} \), \( p_n \in (0,1) \), a sequence of Boolean functions \( (f_n)_{n \in \mathcal{N}} \) is said to be \emph{non-degenerate} if \( 0 < \liminf_{n \to \infty } P(f_n(X_0^{(n)}) = 1) \leq \liminf_{n \to \infty } P(f_n(X_0^{(n)}) = 1)  < 1 \). A Boolean function \( f_n \colon \{ 0,1 \}^{n} \to \{ 0,1 \} \) is said to be transitive if for all \( i,j \in [n] \coloneqq \{ 1,2, \ldots, n \} \) there is a permutation \( \sigma \) of \( [n] \) which is such that (i) \( \sigma(i) = j \) and (ii) for all \( x \in \{ 0,1 \}^{n} \), if we define \( \sigma(x) \coloneqq (x(\sigma(k)))_{k \in [n]} \), then \( f_n(x) = f_n(\sigma(x)) \). To simplify notation, we will abuse notation slightly and say that a sequence of Boolean functions \( (f_n)_{n\geq 1} \) is transitive if \( f_n \) is transitive for each \( n \geq 1 \). 
In~\cite{js2006}, the authors show that a sufficient, but not necessary, condition for a non-degenerate sequence \( (f_n)_{n \in \mathcal{N}} \) of Boolean functions to be tame, is that \( \sup_n \mathbb{E}[C_n]< \infty \). It is natural to ask if this condition is also necessary for some natural subset of the set of all sequences of Boolean functions. This is the motivation for the following conjecture.
\begin{conjecture}[Conjecture~1.21 in~\cite{js2006}]\label{the conjecture}
For any sequences \( (p_n)_{n \geq 1} \) and \( \mathcal{N} \), if \( (f_n)_{n \in \mathcal{N}} \) is transitive, non-degenerate  and \( \lim_{n \to \infty} \mathbb{E} [C_n] = \infty \), then \( (f_n)_{n \in \mathcal{N}} \) is not tame.
\end{conjecture}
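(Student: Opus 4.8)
The plan is to assume the three hypotheses --- that $(f_n)_{n\in\mathcal N}$ is transitive, non-degenerate, and satisfies $\mathbb{E}[C_n]\to\infty$ --- and to deduce directly that $(C_n)$ fails to be tight, by producing $\varepsilon>0$ such that $P(C_n\geq k)\geq \varepsilon$ for infinitely many $n$, for every $k$. The natural starting point is an exact expression for $\mathbb{E}[C_n]$ in terms of influences. Each coordinate is resampled at rate one; at stationarity it changes value at rate $2p_n(1-p_n)$, and a resampling of coordinate $i$ flips $f_n$ exactly when $i$ is pivotal (an event depending only on the other coordinates and hence independent of the resampled value). Stationarity of $\pi_n$ then gives
\[
  \mathbb{E}[C_n] = 2 p_n(1-p_n) \sum_{i=1}^{n} I_i(f_n),
\]
where $I_i(f_n) = P\bigl(f_n(X_0^{(n)}) \neq f_n((X_0^{(n)})^{\oplus i})\bigr)$ is the $p_n$-biased influence of coordinate $i$. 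Transitivity forces $I_i(f_n) = I(f_n)/n$ for every $i$, where $I(f_n)=\sum_i I_i(f_n)$, so the hypothesis $\mathbb{E}[C_n]\to\infty$ is precisely the statement $p_n(1-p_n)\,I(f_n)\to\infty$.

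From here I would split according to how the total influence is distributed, as measured by $\sum_i I_i(f_n)^2 = I(f_n)^2/n$. In the \emph{spread} regime, where $I(f_n)=o(\sqrt n)$ so that $\sum_i I_i(f_n)^2\to 0$, I would route through noise sensitivity: for transitive sequences with vanishing sum of squared influences one expects (via a Benjamini--Kalai--Schramm type estimate) noise sensitivity, and by the implication quoted in the introduction, noise sensitive $\Rightarrow$ volatile, giving $C_n\Rightarrow\infty$ and hence non-tightness at once. In the complementary regime, where the per-coordinate influence does not decay, I would attempt a second-moment argument: decompose $C_n=\sum_i C_n^{(i)}$, with $C_n^{(i)}$ counting the switches produced by resamplings of coordinate $i$, note that $\mathbb{E}[C_n^{(i)}]=\mathbb{E}[C_n]/n$ is small relative to $\mathbb{E}[C_n]$, and try to prove $\Var(C_n)=o(\mathbb{E}[C_n]^2)$. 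Chebyshev would then give $C_n/\mathbb{E}[C_n]\to 1$ in probability, hence non-tightness. The role of transitivity here is to tame the cross terms $\Cov(C_n^{(i)},C_n^{(j)})$, since all ordered pairs $(i,j)$ collapse into boundedly many orbit types under the automorphism group.

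The hard part --- and the step I expect to be the decisive obstacle --- is ruling out \emph{temporal clustering} of the switches. Tightness together with $\mathbb{E}[C_n]\to\infty$ is possible exactly when switches are rare but arrive in enormous bursts, i.e.\ when $P(C_n\geq 1)\to 0$ while $\mathbb{E}[C_n\mid C_n\geq 1]\to\infty$; this occurs if the trajectory $(X^{(n)}_t)$ only occasionally enters a region where $f_n$ is highly sensitive and then oscillates many times before leaving it. To close either branch above one must lower bound $P(C_n\geq 1)$ away from $0$, or equivalently control the conditional burst size, and the available hypotheses look too weak for this: non-degeneracy only pins $P(f_n=1)$ away from $0$ and $1$ and says nothing about how often the dynamics crosses the level set $\{f_n=1\}$, while transitivity is a purely spatial symmetry that supplies no control over correlations in time. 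Consequently the covariance bound $\Var(C_n)=o(\mathbb{E}[C_n]^2)$ need not hold, and the noise-sensitivity branch relies on a monotonicity-flavoured input that transitivity alone does not provide. I would regard the control of these conditional burst sizes as the crux on which the entire approach rests, and as precisely the mechanism that a would-be counterexample would be built to exploit.
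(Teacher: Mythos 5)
There is no way to repair this proof, because the statement is false: the paper you are commenting on exists precisely to disprove Conjecture~\ref{the conjecture}. Theorem~\ref{theorem: first result} constructs, for any decreasing \( (p_n)_{n\geq 1} \) with \( np_n \to \infty \), \( np_n^r \to 0 \) for some \( r \geq 2 \), and the regularity condition (C), a sequence that is simultaneously transitive, non-degenerate, tame, and has \( \mathbb{E}[C_n] \to \infty \). Concretely, \( f_n(x) = g_n(x)I(\|x\| < H_n - 1) + I(\|x\| \geq H_n) \), where \( g_n \) is the ``easily convinced tribes'' function of Definition~\ref{def: easily convinced tribes} (non-degenerate, transitive and tame by Lemma~\ref{lemma: gn}), and the threshold indicator \( h_n(x) = I(\|x\| \geq H_n) \) with \( H_n = np_n + a_n\sqrt{np_n(1-p_n)} \), \( a_n = \sqrt{\log(np_n)}/2 \), is degenerate, transitive and \emph{lame}, yet satisfies \( \mathbb{E}[C_n(h_n)] \sim 2\sqrt{np_n}\,e^{-a_n^2}/\sqrt{2\pi} \to \infty \) (Lemma~\ref{lemma: hn}). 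So no argument, along your lines or any other, can establish the conjecture as stated; at best one could hope for the modified Conjecture~\ref{the conjecture II}, which restricts \( (p_n)_{n \geq 1} \) to exclude this regime.

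That said, your final paragraph correctly isolates the exact mechanism the counterexample exploits, and your instinct that it is fatal is right. In \( f_n \), the divergent expectation is carried entirely by an event of vanishing probability: switches of the \( h_n \)-part require the magnetization \( \|X_t^{(n)}\| \), which rescales to an Ornstein--Uhlenbeck process, to climb to the level \( H_n \) during \( (0,1) \), an event whose probability tends to \( 0 \) since \( a_n \to \infty \), but which conditionally produces a divergent number of switches. This breaks both of your branches concretely. The second-moment branch fails because \( \Var(C_n) \) is of strictly larger order than \( \mathbb{E}[C_n]^2 \) here --- transitivity equalizes the \( I_i \) but places no constraint at all on the temporal covariances \( \Cov(C_n^{(i)}, C_n^{(j)}) \), exactly as you feared. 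The noise-sensitivity branch fails twice over: BKS-type criteria are formulated at fixed \( p \) (the quantitative form at \( p = 1/2 \)) and are not available in the regime \( p_n \to 0 \) with \( np_n^r \to 0 \) used by the paper; and in any case the counterexample does not lie in your ``spread'' regime, since \( \sum_i I_i(f_n)^2 = I(f_n)^2/n \) diverges there rather than tending to \( 0 \). (Your influence identity \( \mathbb{E}[C_n] = 2p_n(1-p_n)\sum_i I_i(f_n) \) is itself correct and matches Proposition~1.19 of~\cite{js2006} after the change of normalization noted in the paper's remarks.) So your submission should be read not as a proof with a gap but as a correct diagnosis of why the conjecture, without further hypotheses on \( (p_n)_{n \geq 1} \), is false.
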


The main objective of this paper is to show that this conjecture, in its full generality, is false. This result will follow as an immediate consequence of the following theorem, which is our main result.

\begin{theorem}\label{theorem: first result}
Let  \( (p_n)_{n \geq 1} \) be a decreasing sequence which is such that 
\begin{enumerate}[(A)]
	\item \( \lim_{n \to \infty} n p_n = \infty \)
	\item \( \lim_{n \to \infty} n p_n^r = 0 \) for some \( r \geq 2 \), and
	\item for any mapping \( \phi \colon \mathbb{N} \to \mathbb{N}  \) which satisfies \( \lim_{n \to \infty} |n - \phi( n)|/(n \land \phi(n))  = 0 \), we have \( \lim_{n \to \infty} p_n / p_{\phi(n)} = 1\).
	\end{enumerate}
Then there is a sequence of positive integers \( \mathcal{N} \) and  a sequence \( (f_n)_{n \in \mathcal{N}} \) of Boolean functions, \( f_n \colon \{ 0,1 \}^{n} \to \{ 0,1 \} \), which (w.r.t\ \( (p_n)_{n \geq 1}\)) is
\begin{enumerate}[(a)]
    \item  non-degenerate,
    \item transitive,
    \item tame, and 
    \item \( \lim_{n \to \infty}\mathbb{E}[C_n] = \infty \).
\end{enumerate}
\end{theorem}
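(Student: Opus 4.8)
The plan is to exhibit a sequence $(f_n)$ whose crossing count $C_n$ has a \emph{heavy but controlled} tail, of the shape $P(C_n \ge k) \asymp a_n\min(1,1/k)$ up to a cutoff $k \lesssim b_n$, with $a_n$ bounded (so that $\lim_{k\to\infty}\limsup_{n\to\infty}P(C_n\ge k)=0$, which is exactly tightness) and yet $a_n\log b_n \to \infty$ (so that $\mathbb E[C_n] = \sum_{k\ge 1}P(C_n\ge k)\to\infty$). The coexistence of these two facts is precisely what Conjecture~\ref{the conjecture} forbids, so producing such a tail is the whole point. Before building $f_n$, I would record two reductions. First, a value change of $(f_n(X_t))$ occurs exactly when some coordinate's rate-one clock rings, that coordinate actually changes value, and it is pivotal; since at stationarity the refreshed value, the old value, and the other coordinates are mutually independent, and pivotality does not depend on the coordinate's own value, transitivity yields the clean identity
\[
  \mathbb E[C_n] = n\cdot 2p_n(1-p_n)\cdot \iota_n, \qquad \iota_n \coloneqq P\bigl(\text{coordinate } 1 \text{ is pivotal for } f_n\bigr).
\]
Because (A) and (B) force $p_n\to 0$ (indeed $p_n^{\,r-1}=np_n^r/(np_n)\to 0$), this is $\sim 2(np_n)\iota_n$, so requirement (d) becomes $np_n\,\iota_n\to\infty$. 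Second, non-degeneracy is just the statement that $P(f_n=1)$ stays in a fixed compact subinterval of $(0,1)$.

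For the construction I would use a transitive but genuinely \emph{non-symmetric} function, since a weight-based $f_n=h(S)$ cannot work: where $h$ is locally constant near the modal weight $np_n$ the function is degenerate, and otherwise crossing the mode forces a concentrated $\Theta(\sqrt{np_n})$ crossings, i.e.\ volatility. So I would pass to a subsequence $\mathcal N$ on which $n$ factors as $n=m_ns_n$, index the coordinates by $[m_n]\times[s_n]$ so that the wreath product $S_{s_n}\wr S_{m_n}$ acts transitively, and make $f_n$ invariant under this group (hence transitive, giving (b)). Within this frame I would design $f_n$ to be \emph{noise stable and locally frozen} on typical configurations, so that $C_n=0$ with probability bounded away from $0$, while on a \emph{rare triggering event} — a single block entering a near-critical, metastable loading — the restriction of $f_n$ to that block reproduces a critical weight/parity oscillation and emits a burst of pivotal flips. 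Sparsity from (B) makes distinct blocks effectively independent and lets me compute the trigger probability by Poisson approximation; tuning $m_n,s_n$ and the trigger threshold so that the number of triggered blocks is asymptotically $\mathrm{Poisson}(c)$ with $c\in(0,\infty)$ fixes $P(f_n=1)\to 1-e^{-c}\in(0,1)$ and gives (a). Condition (C), the slow variation of $p_n$, is what makes a factorization-compatible choice of parameters available along $\mathcal N$ and keeps the bias essentially constant across the scales $s_n$ and $n$ that appear.

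The engine of both (d) and (c) is the law of the burst from one triggered block. I would arrange that block's internal dynamics to be \emph{critical}, so that the number of crossings it contributes is distributed like a critical excursion (equivalently, the height of a critical branching structure), whose tail is $\asymp 1/k$ up to the natural cutoff $b_n\asymp s_n$ set by the block size. Superimposed over the $\mathrm{Poisson}(c)$ number of simultaneously triggered blocks, $C_n$ inherits $P(C_n\ge k)\asymp c/k$ for $1\le k\lesssim s_n$. This is the profile from the first paragraph: light enough that $\limsup_n P(C_n\ge k)\le C/k\to 0$, yielding tightness (c); heavy enough that $\mathbb E[C_n]\asymp c\log s_n\to\infty$, yielding (d) and matching the influence computation $np_n\iota_n\to\infty$.

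The hard part will be the two-sided, $n$-uniform control of this tail. The upper bound $P(C_n\ge k)\le C/k$ must hold with a constant free of $n$ all the way down to the cutoff, and this rests on two delicate points: (1) showing that a single triggered block genuinely produces a $1/k$ critical-excursion tail rather than a geometric one — subtle because a naive high threshold has a downward drift in the sparse regime and emits only $O(1)$ crossings, so the triggered dynamics must be tuned to be \emph{exactly} critical — and (2) controlling interactions and simultaneous bursts across blocks, where I would invoke the sparsity from (B) together with a second-moment or coupling estimate to show the block contributions behave like an independent thinned superposition and cannot conspire to concentrate $C_n$. Once this uniform heavy-tail estimate is in place, verifying (a) and (b) is routine (Poisson approximation and the wreath-product symmetry), and (d) follows from the influence formula, so the entire proof stands or falls on the tail bound.
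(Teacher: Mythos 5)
Your high-level diagnosis is right --- the only way to be tame with $\mathbb{E}[C_n]\to\infty$ is to concentrate the excess expectation in a tail that is uniformly negligible for tightness --- but the proposal leaves the entire engine of the argument unbuilt, and you say so yourself: ``the entire proof stands or falls on the tail bound,'' and no construction realizing the claimed $P(C_n\ge k)\asymp c/k$ critical-excursion profile is given. Designing a block whose triggered dynamics is ``exactly critical'' so as to emit a $1/k$-tailed burst, and then proving the two-sided $n$-uniform tail estimate together with the cross-block independence, is not a routine verification; it is the whole theorem, and as written there is no evidence it can be carried out in the sparse regime $np_n^r\to 0$. This is a genuine gap.

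Moreover, your reduction step contains a false dichotomy that steers you away from a much simpler construction --- the one the paper actually uses. You argue that a weight-based $f_n=h(\|x\|)$ cannot work because near the mode it is either locally constant (degenerate) or forces $\Theta(\sqrt{np_n})$ concentrated crossings (volatile). This ignores thresholds in the moderate-deviation regime: with $H_n=np_n+a_n\sqrt{np_n(1-p_n)}$ and $a_n=\sqrt{\log(np_n)}/2$, the indicator $h_n=I(\|x\|\ge H_n)$ is \emph{lame} (the rescaled weight process converges to an Ornstein--Uhlenbeck process, and the probability of ever reaching level $a_n\to\infty$ in time $1$ vanishes), yet by the influence identity $\mathbb{E}[C_n(h_n)]\sim 2\sqrt{np_n}\,e^{-a_n^2}/\sqrt{2\pi}=2(np_n)^{1/4}/\sqrt{2\pi}\to\infty$. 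So the tail profile needed is not $c/k$ but the far cruder $P(C_n(h_n)\ge k)\le P(\text{rare event})\to 0$ for every $k\ge 1$: all the expectation lives on an event of vanishing probability, and no critical tuning is required. The paper then restores non-degeneracy and transitivity by superposing this on a separate tame, transitive, non-degenerate function (an ``easily convinced tribes'' function $g_n$, equal to $1$ iff some block of size $\ell_n\asymp(np_n^r)^{-1/(r-1)}$ contains at least $r$ ones, which is tame because exiting the event $\{g_n=1\}$ takes a time stochastically dominating an exponential), setting $f_n=g_n\,I(\|x\|<H_n-1)+h_n$. Your wreath-product symmetry idea and your influence identity $\mathbb{E}[C_n]=2np_n(1-p_n)\iota_n$ are both correct and are used in essentially this form in the paper, but the burst mechanism you propose is unnecessary and unproven; the rare-threshold mechanism replaces it entirely.
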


In the proof of Theorem~\ref{theorem: first result} we give  explicit examples of sequences of functions which satisfies the above conditions, and hence contradicts Conjecture~\ref{the conjecture}, for sequences \( (p_n)_{n \geq 1} \) which satisfies the assumptions above.
We remark however that this example cannot directly be extended to the case  \( 0 \ll p_n \ll 1 \). In particular, the conjecture might hold with some additional restriction on the sequence \( (p_n)_{n \geq 1} \).

\begin{remark}
The assumption that \( \lim_{n \to \infty} np_n = \infty \) is very natural. To see this, note that the number of jumps of \( (X_t)_{t \geq 0} \) in \( (0,1) \) is given by \( 2np_n(1-p_n) \), and hence if \( \limsup_{n \to \infty} np_n < \infty \), then any sequence \( (f_n)_{n \geq 1} \) of Boolean functions is tame and satisfies    \( \limsup_{n \to \infty} \mathbb{E}[C_n] < \infty \).
\end{remark}

\begin{remark}
With slightly more work, one can modify the example given in the proof of Theorem~\ref{theorem: first result} to get a sequence of functions which, in addition to satisfying (a), (b), (c) and (d) of Theorem~\ref{theorem: first result}, is also monotone in the sense that if \( x,x' \in \{ 0,1 \}^n \), \( n \in \mathcal{N} \), if \( x(i) \leq x'(i) \) for all \( i \in [n] \), then \( f(x) \leq f(x') \). 
\end{remark}

 \begin{remark}
 The same idea which is used in the proof of Theorem~\ref{theorem: first result} work in general to disprove Conjecture~\ref{the conjecture} whenever one can find a non-degenerate, tame and transitive sequence of Boolean functions. In particular, this implies that the assumption that \( \mathbb{E}[C_n] = \infty \) can be dropped from Conjecture~\ref{the conjecture}.
 \end{remark}

 With the previous remark in mind, we suggest the following modified conjecture.
 \begin{conjecture}\label{the conjecture II}
If \( (p_n)_{n \geq 1} \) satisfies \( \lim_{n \to \infty }np_n^r = \infty \) for all \( r > 0 \), and \( (f_n)_{n \geq 1} \) is transitive and  non-degenerate (w.r.t.\ \((p_n)_{n\geq 1}\)), then \( (f_n)_{n \geq 1} \) is not tame.
\end{conjecture}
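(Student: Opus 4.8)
The plan is to argue by contradiction, but the obstacle is subtle: the counterexample of Theorem~\ref{theorem: first result} shows that \( \mathbb{E}[C_n]\to\infty \) is perfectly compatible with tameness (a vanishingly rare event carrying a huge burst of crossings keeps \( (C_n) \) tight), so it does not suffice to bound a single moment. My strategy therefore has two stages: first show that the hypothesis on \( (p_n)_{n\geq 1} \) forces \( \mathbb{E}[C_n]\to\infty \) for \emph{every} transitive non-degenerate sequence, and then show that under the same hypothesis these crossings cannot concentrate on a rare event. The starting point is the exact crossing intensity: computing the rate at which resampling a pivotal coordinate changes the value of \( f_n \) gives
\[
\mathbb{E}[C_n] \;=\; 2p_n(1-p_n)\sum_{i=1}^n \mathrm{Inf}_i(f_n) \;=\; 2\sum_{S}|S|\,\hat f_n(S)^2,
\]
where \( \mathrm{Inf}_i(f_n)=\pi_n\bigl(f_n(x^{i\to 0})\neq f_n(x^{i\to 1})\bigr) \) is the pivotality of coordinate \( i \) and \( \hat f_n \) are the \( p_n \)-biased Fourier--Walsh coefficients. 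I would also record the elementary but crucial reformulation of the hypothesis: \( \lim_n np_n^r=\infty \) for all \( r>0 \) is equivalent to \( \log(1/p_n)=o(\log n) \), i.e.\ to \( \log n/\log(1/p_n)\to\infty \).

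Step one is to prove \( \mathbb{E}[C_n]\to\infty \). By transitivity all pivotalities coincide, so the \( L^2 \)-influences \( \mathrm{Inf}_i^{(2)}:=\sum_{S\ni i}\hat f_n(S)^2=p_n(1-p_n)\,\mathrm{Inf}_i(f_n) \) are all equal, and a \( p_n \)-biased version of the KKL theorem yields, for a universal \( c>0 \),
\[
\max_i \mathrm{Inf}_i^{(2)}(f_n)\;\geq\; c\,\Var\bigl(f_n\bigr)\,\frac{\log n}{n\,\log(1/p_n)}.
\]
For transitive \( f_n \) the maximum equals the average, so summing over \( i \) gives \( \mathbb{E}[C_n]=2\sum_i\mathrm{Inf}_i^{(2)}\geq 2c\,\Var(f_n)\,\log n/\log(1/p_n) \). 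Non-degeneracy bounds \( \Var(f_n) \) away from \( 0 \), and the reformulated hypothesis makes the ratio diverge, so \( \mathbb{E}[C_n]\to\infty \). As a sanity check, the balanced \( p_n \)-biased tribes function has block size \( \asymp \log n/\log(1/p_n) \) and realizes \( \mathbb{E}[C_n]\asymp \log n/\log(1/p_n) \); this is the correct order, the hypothesis on \( (p_n) \) is exactly the threshold for divergence, and under polynomial decay \( p_n=n^{-\alpha} \) the bound degenerates to \( \mathbb{E}[C_n]\asymp 1 \), consistent with tameness being possible there.

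Step two, the main obstacle, is to upgrade \( \mathbb{E}[C_n]\to\infty \) to ``\( (C_n) \) not tight'' by ruling out bursts. I would attempt a second-moment estimate on the crossing point process. Writing \( \lambda_n=\mathbb{E}[C_n] \) and letting \( \rho_n(|t-s|) \) be the pair-correlation intensity of the crossing process (its second factorial moment density), stationarity gives
\[
\Var(C_n)\;=\;\mathbb{E}[C_n]\;+\;\int_0^1\!\!\int_0^1\bigl(\rho_n(|t-s|)-\lambda_n^2\bigr)\,ds\,dt.
\]
If one can show the excess pair-correlation integral is \( o(\lambda_n^2)=o(\mathbb{E}[C_n]^2) \), then \( C_n/\mathbb{E}[C_n]\to 1 \) in probability and \( (C_n) \) is not merely non-tight but volatile. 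Here \( \rho_n(\tau) \) measures how strongly pivotality at two times separated by \( \tau \) is correlated under the product dynamics (in which each coordinate has been resampled with probability \( 1-e^{-\tau} \)); expanding it spectrally expresses the excess correlation through pairs \( (S,T) \) of frequencies weighted by \( |S||T|\hat f_n(S)\hat f_n(T) \) and the noise operator, i.e.\ through the joint law of two correlated copies of the \( p_n \)-biased spectral sample of \( f_n \).

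The crux is precisely that this excess correlation is large in the burst regime and must be controlled by transitivity together with slow decay of \( p_n \). Intuitively, slow decay forces the pivotal structure to be spread over many coordinates (large tribes blocks, large effective support of the spectral sample), which should preclude the clustering of crossing times that produces bursts; making this quantitative is the hard step, and it is closely analogous to the delicate spectral-sample estimates in the theory of dynamical percolation, which is why I expect it to be the principal difficulty and why the statement is a conjecture rather than a theorem. I would first treat the monotone case, where the FKG inequality gives \( \rho_n(\tau)\geq\lambda_n^2 \) and a one-sided handle on the covariance of pivotal events across time, and only then attempt the general transitive case by replacing positive correlation with hypercontractive bounds on the two-time spectral weight.
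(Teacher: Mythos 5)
This statement is Conjecture~\ref{the conjecture II}: the paper states it precisely because no proof is known, so there is no proof of the paper to compare against, and your text has to stand on its own as a complete argument --- which, by your own admission, it does not. Your Step one is plausible and consistent with the paper's closing remark (citing~\cite{bkkkl}) that transitive non-degenerate functions have total influence at least of order \( p_n^2\log n \), and your reformulation \( \lim_n np_n^r=\infty \ \forall r>0 \iff \log(1/p_n)=o(\log n) \) is correct; but as you yourself note, Theorem~\ref{theorem: first result} shows \( \mathbb{E}[C_n]\to\infty \) is compatible with tameness, so Step one carries essentially no force toward the conclusion. The entire content of the conjecture lies in your Step two, and there you do not prove anything: the key estimate (excess pair-correlation \( o(\lambda_n^2) \)) is explicitly deferred as ``the hard step.''

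Worse, the target you set in Step two is not merely unproven --- it is false in the stated generality, so the route cannot work as designed. Take \( p_n=1/2 \) and \( f_n \) the Majority function: the hypothesis \( np_n^r\to\infty \) holds for every \( r>0 \), and \( f_n \) is transitive and non-degenerate, yet~\cite{js2006} shows Majority is neither tame nor volatile. Concretely, \( P(C_n=0)\geq c>0 \) uniformly in \( n \) (the rescaled weight process converges to a stationary Ornstein--Uhlenbeck process, which stays on one side of its mean throughout \( [0,1] \) with positive probability), while \( \lambda_n=\mathbb{E}[C_n]\asymp\sqrt{n}\to\infty \). Hence \( \Var(C_n)=\mathbb{E}[(C_n-\lambda_n)^2]\geq P(C_n=0)\,\lambda_n^2\geq c\,\lambda_n^2 \), so the excess pair-correlation integral is of order \( \lambda_n^2 \), not \( o(\lambda_n^2) \), and \( C_n/\mathbb{E}[C_n]\not\to 1 \). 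Any proof of the conjecture must therefore establish bare non-tightness (e.g., \( \liminf_n P(C_n\geq k)>0 \) for every fixed \( k \)) for noise-stable examples like Majority by a mechanism other than concentration; your second-moment program proves something strictly stronger than what is true. A further local flaw: even for monotone \( f_n \), the two-time pivotality/crossing events are not increasing events, so FKG does not yield \( \rho_n(\tau)\geq\lambda_n^2 \) as claimed. In short, Step one is a reasonable (known-techniques) lemma, but Step two --- the actual conjecture --- is missing, and the specific strategy proposed for it is refuted by the Majority sequence.
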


\section{Proof of the main result}

\begin{definition}[Easily convinced tribes]\label{def: easily convinced tribes}
Fix \( r \geq 2 \) and \( n \geq 1 \), and let \( \ell_n \geq 2\) and \( k_n \) be positive integers with the property that \( \ell_n k_n = n \). Partition \( [n] \) into \( k_n \) sets \( S_1^{(n)}, S_2^{(n)}, \ldots, S_{k_n}^{(n)}\), each of size \( \ell_n \), and for \( x \in \{ 0,1 \}^n \), let \( g_n(x) = g_n^{(k_n,\ell_n,r)}(x) \) be equal to one exactly when there is some \( j \in \{ 1,2, \ldots, k_n \} \) such that \( \sum_{i \in S_j^{(n)}} x(i) \geq r \).
\end{definition}

Since Definition~\ref{def: easily convinced tribes} requires that \( k_n\ell_n = n \) and that \( \ell_n \geq 2 \), \( g_n^{(k_n, \ell_n, r)} \) is only well defined when \( n \) is not a prime, and we will in general only want to consider sub-sequences \( \mathcal{N} \) of \( \mathbb{N} \) which have the property that \( k_n \) and \( \ell_n \) can be chosen such that they satisfy certain growth conditions.

We will now show that we can choose \( r  \), \( (p_n)_{n \geq 1} \),  \( \mathcal{N}   \), \( (\ell_n)_{n \in \mathcal{N}} \) and  \( (k_n)_{n\in \mathcal{N}} \)  so that (a), (b) and (d) of Theorem~\ref{theorem: first result} hold.

\begin{lemma}\label{lemma: gn}
For any \( r \geq 2 \) and any decreasing sequence \( (p_n)_{n \geq 1} \) which satisfies the assumptions of Theorem~\ref{theorem: first result}, there is \( \mathcal{N} \) and sequences  \( (\ell_n)_{n \in \mathcal{N}} \), \( (k_n)_{n \in \mathcal{N}} \) of positive integers such that \( (g_n)_{n \in  \mathcal{N}} \) is 
\begin{enumerate}[(a)]
    \item non-degenerate,
    \item transitive, and
    \item tame.
\end{enumerate} 
\end{lemma}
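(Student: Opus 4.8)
The plan is to tune $\ell_n$ and $k_n$ so that, under $\pi_n$, the number of blocks containing at least $r$ ones converges to a $\Poisson(\lambda)$ variable for some fixed $\lambda \in (0,\infty)$; all three conclusions can then be read off from this picture. Writing $q_n \coloneqq \pi_n\bigl(\sum_{i \in S_1^{(n)}} x(i) \geq r\bigr) = P\bigl(\mathrm{Bin}(\ell_n,p_n) \geq r\bigr)$, the disjointness of the blocks and the independence of the coordinates under $\pi_n$ give that the number of blocks with at least $r$ ones is $\mathrm{Bin}(k_n,q_n)$, so that
\begin{equation*}
P\bigl(g_n(X_0^{(n)}) = 1\bigr) = 1 - (1-q_n)^{k_n}.
\end{equation*}
Assumption~(A) will force $\ell_n p_n \to 0$ (see below), whence $q_n \sim \binom{\ell_n}{r}p_n^r \sim (\ell_n p_n)^r/r!$, and I would choose the parameters so that $k_n q_n$ stays bounded away from $0$ and $\infty$; by the displayed identity this keeps $P(g_n(X_0^{(n)})=1)$ bounded away from $0$ and $1$, giving (a). Property~(b) needs no tuning at all: every element of the wreath product $S_{\ell_n}\wr S_{k_n}$, permuting coordinates within blocks and permuting the blocks themselves, fixes the multiset of block-sums and hence commutes with $g_n$, and this group already acts transitively on $[n]$.

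The arithmetic of the selection is the crux. Imposing $k_n q_n \asymp 1$ together with $k_n\ell_n = n$ forces the target block size $\ell_n \asymp (np_n^r)^{-1/(r-1)}$, which tends to $\infty$ by~(B) while $\ell_n/n \asymp (np_n)^{-r/(r-1)} \to 0$ and $\ell_n p_n \asymp (np_n)^{-1/(r-1)} \to 0$ by~(A); the last limit is exactly what justifies the Poisson regime used above. The obstacle is that $\ell_n$ must be an integer $\geq 2$ that divides $n$ and lies near this target, which a prescribed $n$ need not permit. Here I would exploit the freedom to choose $\mathcal N$ and hypothesis~(C): since the target block size is $o(n)$, one can replace $n$ by a nearby multiple $n'$ of the chosen $\ell_n$ inside a window of relative width $o(1)$, and~(C) guarantees $p_{n'} \sim p_n$, so the asymptotics of $q_n$ are preserved. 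Carrying out this rounding — constructing $\mathcal N$, $(\ell_n)_{n\in\mathcal N}$ and $(k_n)_{n\in\mathcal N}$ simultaneously so that $k_n\ell_n = n$ and $k_nq_n$ is bounded away from $0$ and $\infty$ — is the step I expect to be the most delicate.

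For tameness~(c) I would not analyse excursions of $g_n$ directly, but instead bound the total activity of the blocks. Let $N_j$ denote the number of times the indicator $\mathbf 1\bigl(\sum_{i \in S_j^{(n)}} X_t^{(n)}(i) \geq r\bigr)$ changes value for $t \in (0,1)$. Since $g_n(X_t^{(n)}) = \max_{j}\mathbf 1\bigl(\sum_{i \in S_j^{(n)}} X_t^{(n)}(i) \geq r\bigr)$ and, almost surely, no two coordinates are resampled at the same instant, every change of $g_n$ coincides with a change of a single block indicator; hence $C_n \leq \sum_{j=1}^{k_n} N_j$. By stationarity $\mathbb E[N_j]$ equals the rate at which block $j$ crosses the threshold $r$, which is twice the inactive-to-active rate, and such a transition requires the block to sit at exactly $r-1$ ones followed by a $0 \to 1$ flip, so
\begin{equation*}
\mathbb E[N_j] \leq 2\binom{\ell_n}{r-1}p_n^{r-1}\,\ell_n p_n \leq \frac{2(\ell_n p_n)^r}{(r-1)!}.
\end{equation*}
Summing over blocks gives $\mathbb E[C_n] \leq k_n\,\mathbb E[N_1] \leq 2r\,k_n(\ell_n p_n)^r/r! \sim 2r\,k_n q_n$, which is bounded by the choice of parameters above. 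A uniformly bounded sequence of means is tight, so $(C_n)_{n\in\mathcal N}$ is tame; equivalently, this is precisely the sufficient condition $\sup_n \mathbb E[C_n] < \infty$ of~\cite{js2006}.
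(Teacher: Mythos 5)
Your proposal is correct, and for parts (a) and (b) it is essentially the paper's argument: the same parameter tuning \( p_n^r \ell_n^r k_n \asymp 1 \) (equivalently \( k_n q_n \asymp 1 \)), the same verification that \( \ell_n \asymp (np_n^r)^{-1/(r-1)} \to \infty \) and \( \ell_n p_n \to 0 \) under (A) and (B), and the same block-preserving permutations for transitivity (the paper writes them out explicitly rather than invoking the wreath product). Your treatment of the integer/divisibility issue is only sketched, but the sketch is exactly the paper's mechanism: replace \( n \) by \( \hat n = \lceil \ell_n\rceil\lceil k_n\rceil \), note \( |n-\hat n| = o(n) \), and use hypothesis (C) to keep \( p_{\hat n} \sim p_n \); so nothing is missing in principle, you have just deferred the bookkeeping that the paper carries out. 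Where you genuinely diverge is part (c). The paper's tameness argument is soft: whenever \( g_n = 1 \) there are at least \( r \) witness coordinates that must be rerandomized before \( g_n \) can return to \( 0 \), so each \( 1 \)-excursion stochastically dominates an \( \mathrm{Exp}(r) \) length, which makes \( (C_n) \) tight for \emph{any} choice of \( \ell_n, k_n \), independently of the tuning. You instead bound \( \mathbb{E}[C_n] \) by the total block-switching activity \( \sum_j \mathbb{E}[N_j] \lesssim k_n(\ell_n p_n)^r \asymp 1 \) and conclude tightness from Markov's inequality; this is valid, and in fact the paper's remark following the lemma records precisely this fact (\( \sup_n \mathbb{E}[C_n(g_n)] < \infty \), computed via Proposition~1.19 of~\cite{js2006}) as the reason \( g_n \) alone is \emph{not} a counterexample to the conjecture and must be perturbed by \( h_n \). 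The trade-off: your route gives the stronger quantitative conclusion of bounded total influence but depends on the specific parameter choice, whereas the paper's excursion argument is parameter-free and isolates the probabilistic reason for tameness.
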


\begin{proof}[Proof of Lemma~\ref{lemma: gn}]
Assume that there are sequences \( \mathcal{L} \), \( (\ell_n)_{n \in \mathcal{L}} \) and \( (k_n)_{n \in \mathcal{N}} \) such that
\begin{enumerate}[(i)]
    \item \( 2r < \inf \ell_n  \),
    \item \( \lim_{n \to \infty}p_n \ell_n = 0 \), and
    \item \( p_n^r \ell_n^r k_n \asymp 1 \).
\end{enumerate}
We first show that this assumption implies that the conclusions of the lemma hold, and then show that we can find sequences \( \mathcal{L} \), \( (\ell_n)_{n \in \mathcal{L}} \) and \( (k_n)_{n \in \mathcal{N}} \) with these properties.

\begin{proof}[Proof of (a)]
    \renewcommand{\qedsymbol}{}
Note first that 
  \begin{equation}\label{eq: proof eq 1}
    P(g_n(X_0^{(n)})=0) = \biggl( \sum_{i=0}^{r-1} \binom{\ell_n}{i} p_n^i (1-p_n)^{\ell_n-i}  \biggr)^{k_n}.
    \end{equation}
    
    For integers \( 0 < r < \ell \) such that \( 2 r < \ell \), define  \( T \colon \mathbb{R} \to \mathbb{R}\) by \( T(x) \coloneqq \sum_{i=0}^{r-1} \binom{\ell}{i} x^i (1-x)^{l-i} \). Then
        \begin{equation*}
        \begin{cases}
        T(x) = \sum_{i=0}^{r-1} \binom{\ell}{i} x^i (1-x)^{l-i} \cr 
        T'(x) = -\binom{\ell}{r} \cdot rx^{r-1} (1-x)^{\ell-r} \cr
        T''(x) = -\binom{\ell}{r} \cdot r(r-1)x^{r-2} (1-x)^{\ell-r}+\binom{\ell}{r} \cdot r(\ell-r)x^{r-1} (1-x)^{\ell-r-1} \cr
        \ldots  \cr
        T^{(m)}(x) = -\binom{\ell}{r} \sum_{i=1}^{m \land r\land \ell-r}  \binom{r}{i} (r)_i x^{r-i} (\ell-r)_{m-i}(1-x)^{\ell-r-i}(-1)^{m-i}    
        \end{cases}
    \end{equation*}
    and hence
    \begin{equation*} 
        \begin{cases}
        T(0) = 1 \cr 
        T^{(m)}(0) = 0 \text{ if } j = 1,2, \ldots, r-1  \cr
        T^{(r)}(0) = -(\ell)_r    .
        \end{cases}
    \end{equation*} 
    Moreover, if we assume that  \( x \in (0,1) \) and that \( \ell x < 1 \), then for all \( \xi \in (0,x) \) we have that
    \begin{align*} 
        &|T^{(r+1)}(\xi)| = \Bigl| -\binom{\ell}{r} \sum_{i=1}^{r}  \binom{r}{i} (r)_i \xi^{r-i} (\ell-r)_{r+1-i}(1-\xi)^{\ell-r-i}   (-1)^{r+1-i} \Bigr|
        \\&\qquad \leq \binom{\ell}{r}    \sum_{i=1}^{r}  \binom{r}{i} (r)_i \xi^{r-i} (\ell-r)_{r+1-i} 
        \leq \binom{\ell}{r}    \sum_{i=1}^{r}  \binom{r}{i} (r)_i \xi^{r-i} \ell^{r-i+1} 
        \\&\qquad \leq \binom{\ell}{r}    \sum_{i=1}^{r}  \binom{r}{i} (r)_i \cdot \ell
        \leq  \ell^{r+1} 2^r . 
    \end{align*}
    Applying Taylor's theorem to the right hand side of~\eqref{eq: proof eq 1}, and noting that \( 2^r < (r+1)! \), we obtain
  \begin{align*}
    &P\bigl(f_n(X_0^{(n)})=0 \bigr) =  \Bigl(1 - (\ell_n)_r p_n^r + C_n p_n^{r+1} \ell_n^{r+1}\Bigr)^{k_n}
    \end{align*} 
    where \( |C_n| < 1 \) for all \( n \). By using the inequalities \( e^{-2 x} \leq 1-x \leq e^{-x} \), valid for all \( x \in [0,1/2) \), the desired conclusion follows by applying (iii).
\end{proof}

\begin{proof}[Proof of (b)]
    \renewcommand{\qedsymbol}{}

Fix some \( n \in \mathcal{L}\) and \( i,i' \in [n] \). We need to show that there is a permutation \( \sigma \) which is such that \( \sigma(i) = i' \) and \( f_n(\sigma(x)) = f_n(x) \) for all \( x \in \{ 0,1 \}^n \). We now divide into two cases.
First, if \( i,i' \in S_m^{(n)} \) for some \( m \in [k_n] \), then we can set \( \sigma = (ii') \). 
On the other hand, if there are distinct \( m,m' \in [k_n] \) such that  \( i \in S_m^{(n)} = \{ i, i_2, \ldots, i_{\ell_n}\} \) and \( i' \in S_{m'}^{(n)} = \{ i', i_2', \ldots, i_{\ell_n}'\}\), then we can set \( \sigma = (ii') \prod_{j=2}^{\ell_n} (i_j i_j')\).
This concludes the proof of (b). 
\end{proof}
 
 \begin{proof}[Proof of (c)]
    \renewcommand{\qedsymbol}{}
Fix some \( n \in \mathcal{L} \) and note that whenever \( g_n(X_0^{(n)}) =1\), the distribution of the smallest time \( t > 0 \) at which \( g_n(X_t^{(n)})=0 \) stochastically dominates an exponential distribution with rate \( r \). From this the desired conclusion follows.
\end{proof}

To complete the proof of Lemma~\ref{lemma: gn} it now remains only to show that there are sequences \( \mathcal{N} \),  \( (\ell_n)_{n \in \mathcal{L}} \) and \( (k_n)_{n \in \mathcal{N}} \) such that (i), (ii) and (iii) hold.
To this end, for each  \( n \geq 1 \) let \(  \ell_n  \coloneqq  (np_n^r)^{-1/(r-1)}  \) and \(  k_n \coloneqq n/\ell_n = (np_n)^{r/(r-1)}\). Then one easily verifies that \( 2r < \inf \ell_n  \), \( \lim_{n \to \infty}p_n \ell_n = 0 \), and \( p_n^r \ell_n^r k_n \asymp 1 \).  
However, in general, neither \(  \ell_n \) nor \(  k_n \) need to be integers. 
To fix this problem, define
\begin{equation*}
    \begin{cases}
        \hat n \coloneqq \lceil  \ell_n \rceil \, \lceil  k_n \rceil, \cr
        \hat \ell_{\hat n} \coloneqq \lceil  \ell_n \rceil, \text{ and}\cr
        \hat k_{\hat n} \coloneqq \lceil  k_n \rceil .
    \end{cases}
\end{equation*}
Let \( { \mathcal{N}}\subseteq \mathbb{N} \) be an infinite sequence on which the mapping \( n \mapsto \hat n \) it is a bijection, and let \( \hat {\mathcal{N}} \) be its image. 
We will show that the desired properties hold for \( \hat{\mathcal{N}}\), \( (p_{\hat n})_{\hat n \in \hat {\mathcal{N}}} \), \( (\hat \ell_{\hat n})_{\hat n \in \hat {\mathcal{N}}} \) and \( (\hat k_{\hat n})_{\hat n \in \hat {\mathcal{N}}} \). To this end, note first that 
\begin{equation*}
    \inf_{\hat n \in \hat {\mathcal{N}}} \hat \ell_{\hat n}  = \inf_{ n \in  {\mathcal{N}}} \lceil \ell_n \rceil > \inf_{ n \in \mathbb{N}} \lceil \ell_n \rceil > \inf_{ n \in \mathbb{N}}  \ell_n   > 2r,
\end{equation*}
and hence (i) holds.
Next, since \( \hat n \geq n \) for each \( n \in \mathbb{N} \) and \( (p_n)_{n \geq 1} \) is decreasing, we have that \( p_{\hat n} \leq p_n \) for all \( n \in \mathbb{N} \). Using this observation, we obtain
\begin{equation*}
    \lim_{\hat n \to \infty} \hat \ell_{\hat n} p_{\hat n} \leq \lim_{\hat n \to \infty} \hat \ell_{\hat n} p_{n} = \lim_{n \to \infty}   \lceil \ell_{n} \rceil p_{n}
    = \lim_{n \to \infty}   \bigl(\ell_np_n + (\lceil \ell_{n} \rceil - \ell_n) p_{n}\bigr)=0,
\end{equation*}
and hence (ii) holds.
Finally, to see that (iii) holds, note that for any \( n \in \mathbb{N}\),
\begin{align*}
    &| n - \hat n| =  \hat \ell_{\hat n} - \hat k_{\hat n} -\ell_nk_n
    = \lceil k_n \rceil  \lceil \ell_n \rceil  - \ell_nk_n
    \\&\qquad = (\lceil k_n \rceil - k_n)(\lceil \ell_n \rceil - \ell_n) + \ell_n(\lceil k_n \rceil - k_n) +  k_n(\lceil \ell_n \rceil - \ell_n) < \ell_n + k_n +1.
\end{align*}
Since both \( \ell_n \to \infty \) and \( k_n \to \infty \) by definition, we have \( \lim_{n \to \infty} |n - \hat n|/\hat n  = 0 \), and hence by assumption, \( \lim_{n \to \infty} p_n / p_{\hat n} = 1\). This implies in particular that for \( \hat n \in \hat {\mathcal{N}} \), we have 
\begin{equation*}
    p_{\hat n}^r \hat \ell_{\hat n}^r \hat k_{\hat n} 
    \sim p_{n}^r \hat \ell_{\hat n}^r \hat k_{\hat n}  
    = p_{n}^r  \lceil \ell_{ n}\rceil^r \lceil  k_{ n}  \rceil
     = p_{n}^r  \bigl(\ell_n - (\ell_n - \lceil \ell_{ n}\rceil)\bigr)^r \bigl(k_n - (k_n-\lceil  k_{ n}  \rceil)\bigr) 
\end{equation*}
Using the assumption that \( p_n\ell_n \to 0 \), it follows that
\begin{equation*}
    p_{\hat n}^r \hat \ell_{\hat n}^r \hat k_{\hat n} 
    \sim  p_{n}^r  \bigl(\ell_n - (\ell_n - \lceil \ell_{ n}\rceil)\bigr)^r k_n
    =   p_{n}^r \ell_n^{r} k_n \sum_{i=0}^{r} \binom{r}{i}     \biggl( \frac{\lceil \ell_{ n}\rceil - \ell_n}{\ell_n}\biggr)^{r-i}   
    \sim    p_{n}^r \ell_n^{r} k_n  
    \asymp 1.
\end{equation*}
This concludes the proof.
\end{proof}

\begin{remarks}
Using essentially the same argument as in the proof of Theorem~\ref{theorem: first result}(c), one can show that for any \( r \geq 2 \), \( \mathcal{N} \), \( (\ell_n)_{n \in \mathcal{N}} \) and \( (k_n)_{n\in \mathcal N} \), we have \( \lim_{n \to\infty }\mathbb{E}[C_n(g_n)] < \infty \). This has two interesting consequences.
	\begin{enumerate}
		\item For any choice of parameters, the sequence of Boolean functions defined in~\ref{def: easily convinced tribes} does not satisfy (c) in Theorem~\ref{theorem: first result}, and hence does not provides a counter-example to Conjecture~\ref{the conjecture}. 
		\item Given a Boolean function \( f_n \colon \{ 0,1 \}^n \to \{ 0, 1 \}^n\) and \( i \in \{ 1,2 \ldots, n \} \), we let \( I_i^{(p_n)}(f_n) \) denote the \emph{influence of the \( i \)th bit} on \( f_n \) at \( p_n \), defined as the probability that resampling the \( i\)th bit of \( X_0^{(n)} \) according to \( (1-p_n)\delta_0 + p_n \delta_1 \) changes the value of \( f_n(X_0^{(n)}) \). Note that this definition agrees with the definition of influence given in~\cite{js2006}, but differs with a factor \( 2p_n(1-p_n) \) from the definition of influence used in e.g.\ \cite{bkkkl} and \cite{odonnell}.
			We let \( I^{(p_n)}(f_n) \coloneqq \sum_{i=1}^n I_i^{(p_n)}(f_n) \) and call this the \emph{total influence} of \( f_n \) at \( p_n \).
		By Proposition~1.19 in~\cite{js2006} the total influence of \( g_n \) is equal to  \(  \mathbb{E}[C_n(g_n)]  \). It thus follows from Lemma~\ref{lemma: gn} that when \( \lim_{n \to \infty}np_n^r = 0 \) for some \( r \geq 2 \),  there is a sequence of Boolean functions which is non-degenerate, transitive and have bounded total influence.  Using Proposition~1.19 in~\cite{js2006} together with the proof of Lemma~\ref{lemma: hn} below, it in fact follows that any such sequence could be used to create a counter-example to Conjecture~\ref{the conjecture} as in the proof of Theorem~\ref{theorem: first result}. By contrast, by Theorem~1 in~\cite{bkkkl}, the total influence of any non-degenerate and transitive Boolean function \( f_n \colon \{ 0,1 \}^n \to \{ 0,1 \} \) is  of order at least \( p_n^2 \log n \). In particular, this implies that when \( p_n \gg (\log n)^{-1/2}   \) there can be no sequence of non-degenerate and transitive Boolean functions with bounded total influence. This does however not exclude the possibility of a counter-example to Conjecture~\ref{the conjecture} in this regime.
	\end{enumerate}
\end{remarks}

We now want to modify the sequence \( (g_n)_{n \in \mathcal{N}} \) slightly to obtain a sequence \( (f_n)_{n \in \mathcal{N}} \) of Boolean functions which in addition to satisfying (a), (b) and (c) of Lemma~\ref{lemma: gn} also satisfies \( \lim_{n \to \infty} \mathbb{E}[C_n(f_n)] = \infty \). To this end, we first define a degenerate sequence of Boolean functions with this property. 

\begin{definition}
For each \( n \geq 1 \), let \( a_n > 0 \) and \( H_n \coloneqq np_n + a_n \sqrt{np_n(1-p_n)} \).
For \( x \in \{ 0,1 \}^n \), let \( \|x \|\coloneqq \sum_{i=1}^n x_i \) and define \( h_n(x) \coloneqq I(\| x \|  \geq H_n ) \).
\end{definition}

\begin{lemma}\label{lemma: hn}
If \( \lim_{n \to \infty} np_n = \infty \) and \( a_n = \sqrt{\log(np_n)}/2\), then \( (h_n)_{n \geq 1} \) is 
\begin{enumerate}[(a)]
    \item  degenerate, 
    \item transitive
    \item lame, and satisfies
    \item \( \lim_{n \to \infty}\mathbb{E}\bigl[C_n(h_n) \bigr] = \infty \).
\end{enumerate} 
\end{lemma}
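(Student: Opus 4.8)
The plan is to dispatch (a) and (b) at once and then reduce both (c) and (d) to sharp estimates for the real-valued process $N_t\coloneqq\|X_t^{(n)}\|$. Since $X_0^{(n)}\sim\pi_n$ we have $N_0\sim\mathrm{Bin}(n,p_n)$, with mean $np_n$ and variance $np_n(1-p_n)$, and $H_n-np_n=a_n\sqrt{np_n(1-p_n)}$ with $a_n=\tfrac12\sqrt{\log(np_n)}\to\infty$. For (a), Chebyshev's inequality gives $P(h_n(X_0^{(n)})=1)=P(N_0\ge H_n)\le a_n^{-2}\to0$, so the sequence is degenerate. For (b), $h_n(x)$ depends on $x$ only through $\|x\|$, which is invariant under every coordinate permutation; in particular the transposition $(i\,i')$ realises transitivity.

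Next I record that $(N_t)$ is the birth--death chain with birth rate $\lambda_j=(n-j)p_n$, death rate $\mu_j=j(1-p_n)$, net drift $\lambda_j-\mu_j=np_n-j$ (mean-reverting to $np_n$ at relaxation rate $1$), reversible with stationary law $\mathrm{Bin}(n,p_n)$. For (c) observe that a value change of $h_n(X_t^{(n)})$ can occur only when $N_t$ crosses the level $H_n$, so $\{C_n\ge1\}\subseteq\{\sup_{t\in[0,1]}N_t\ge H_n\}$ and it suffices to prove $P(\sup_{t\in[0,1]}N_t\ge H_n)\to0$. The naive bound by the expected number of up-crossings of $H_n$ is useless: that expectation is $\pi_n(H_n-1)(n-H_n+1)p_n\asymp\sqrt{np_n}\,e^{-a_n^2/2}=(np_n)^{3/8}\to\infty$ (this divergence is exactly what will drive (d)). The resolution is to count \emph{excursions} rather than crossings.

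Fix $m_0=\lfloor np_n\rfloor$ and decompose the trajectory into excursions above $m_0$. By the strong Markov property each up-crossing of $m_0$ starts a fresh excursion that reaches $H_n$ with one and the same probability $q$, so
\[
P\Bigl(\sup_{t\in[0,1]}N_t\ge H_n\Bigr)\le \mathbb E\bigl[\#\{\text{up-crossings of }m_0\text{ in }[0,1]\}\bigr]\cdot q+(\text{straddling-excursion term}).
\]
The stationary up-crossing rate of $m_0$ is $\pi_n(m_0)(n-m_0)p_n\asymp\sqrt{np_n}$. The factor $q$ is a gambler's-ruin probability for the chain, computed from the scale function $\gamma_k=\prod_{i\le k}\mu_i/\lambda_i$ with $\mu_i/\lambda_i\approx1+(i-np_n)/\bigl(np_n(1-p_n)\bigr)$; a Laplace estimate $\sum_{k=m_0}^{H_n-1}\gamma_k/\gamma_{m_0}\asymp(\sqrt{np_n}/a_n)e^{a_n^2/2}$ yields $q\asymp(a_n/\sqrt{np_n})\,e^{-a_n^2/2}$. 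The product is then $\asymp a_n e^{-a_n^2/2}=a_n(np_n)^{-1/8}\to0$, and the excursion straddling $t=0$ is controlled by the same scale function together with $\pi_n$; hence (c). The calibration $a_n=\tfrac12\sqrt{\log(np_n)}$ is precisely what makes $e^{-a_n^2/2}=(np_n)^{-1/8}$ small enough that $a_n e^{-a_n^2/2}\to0$ (lameness) while $\sqrt{np_n}\,e^{-a_n^2/2}\to\infty$ (large expectation).

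For (d), invoke Proposition~1.19 in \cite{js2006}, by which $\mathbb E[C_n(h_n)]$ equals the total influence $I^{(p_n)}(h_n)$. By symmetry $I^{(p_n)}(h_n)=2np_n(1-p_n)\,P\bigl(\mathrm{Bin}(n-1,p_n)=\lceil H_n\rceil-1\bigr)$; since $a_n\sqrt{np_n(1-p_n)}=o\bigl((np_n(1-p_n))^{2/3}\bigr)$ the local central limit theorem applies and gives $P=\Theta\bigl((np_n(1-p_n))^{-1/2}e^{-a_n^2/2}\bigr)$, whence $\mathbb E[C_n(h_n)]\asymp\sqrt{np_n}\,e^{-a_n^2/2}=(np_n)^{3/8}\to\infty$ (equivalently one reads this off the stationary crossing rate of $H_n$ noted above). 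The main obstacle is the third paragraph: the exponential-martingale / maximal-inequality route only produces a first-passage bound $e^{-a_n^2/4}$ per excursion, which loses a factor $2$ in the exponent and makes the union bound diverge, so the sharp constant $e^{-a_n^2/2}$ genuinely requires the scale-function computation; some additional care is also needed to pass rigorously from the discrete crossing picture to the continuous-time running maximum and to treat the excursion in progress at $t=0$.
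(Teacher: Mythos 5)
Your proposal is correct, and parts (a), (b) and (d) follow essentially the same lines as the paper: Chebyshev for degeneracy, invariance of $\|x\|$ under permutations for transitivity, and Proposition~1.19 of~\cite{js2006} to convert $\mathbb{E}[C_n(h_n)]$ into a total influence, which both you (via the local CLT) and the paper (via an exact binomial computation plus Stirling) estimate as $\asymp \sqrt{np_n}\,e^{-a_n^2/2} \to \infty$. The genuine difference is in (c). The paper passes to the diffusion limit: it invokes weak convergence of the rescaled process $\bigl((\|X_t^{(n)}\|-np_n)/\sqrt{2np_n(1-p_n)}\bigr)_{t\geq 0}$ to an Ornstein--Uhlenbeck process, cites Ricciardi--Sato for the asymptotic mean and variance of the OU first-passage time to a high level $z$, and concludes via Paley--Zygmund that $P(\tau_z>t)\to 1$, hence lameness since $a_n\to\infty$. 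You instead stay with the finite-$n$ birth--death chain $N_t=\|X_t^{(n)}\|$ and run a two-level excursion argument: the expected number of up-crossings of $m_0=\lfloor np_n\rfloor$ in $[0,1]$ is $\asymp\sqrt{np_n}$, each excursion reaches $H_n$ with the gambler's-ruin probability $q\asymp (a_n/\sqrt{np_n})e^{-a_n^2/2}$ computed from the scale function, and the product $a_n e^{-a_n^2/2}\to 0$; your observation that the one-level crossing count of $H_n$ itself is useless (it diverges, being exactly the quantity driving (d)) correctly identifies why the intermediate level is needed. Your route is more computational but entirely self-contained and quantitative, and it avoids the delicate point in the diffusion approach that the target level $a_n$ escapes to infinity with $n$, so that a fixed-level weak-convergence statement does not by itself transfer; the paper's route is shorter but leans on external first-passage asymptotics for the OU process. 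To make your version fully rigorous you would still need to write out the Laplace estimate for $\sum_k\gamma_k/\gamma_{m_0}$ and the bound on the excursion straddling $t=0$, but both claims are correct as stated.
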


\begin{proof}
Note first that the assumptions on \( (p_n)_{n \geq 1} \) and \( (a_n)_{n \geq 1} \) together imply that \( \lim_{n \to \infty} a_n = \infty \) and \( \sqrt{np_n}e^{-a_n^2} \to \infty\).

\begin{proof}[Proof of (a)]
    \renewcommand{\qedsymbol}{}
By definition, we have  \(  \mathbb{E}\bigl[\|X_0^{(n)}\|\bigr] = np_n \) and \( \Var \bigl(\|X_0^{(n)}\| \bigr) = np_n(1-p_n) \). Using Chebyshev's inequality, we thus obtain
\begin{align*}
    P\bigl(h_n(X_0^{(n)}) = 1\bigr) = P\Bigl(\| X_0^{(n)}\| \geq \mathbb{E}\bigl[\| X_0^{(n)}\|\bigr] + a_n \sqrt{\Var\bigl(\| X_0^{(n)}\|\bigr)}\Bigr) \leq a_n^{-2}.
\end{align*}
Since \( a_n \to \infty \), this implies that \((h_n)_{n \geq 1} \) is degenerate, which is the desired conclusion.
\end{proof}

\begin{proof}[Proof of (b)]
    \renewcommand{\qedsymbol}{}
Since for any \( x \in \{ 0,1 \}^n \), \( h_n(x) \) depends on \( x \) only through \( \| x \|\), \( (h_n)_{n \geq 1} \) is transitive.
\end{proof}

\begin{proof}[Proof of (c)]
    \renewcommand{\qedsymbol}{}
Recall that whenever \( np_n(1-p_n) \to \infty \), 
\begin{equation*}
   \biggl( \frac{\|X_t^{(n)}\|-np_n}{\sqrt{2np_n(1-p_n)}} \biggr)_{t \geq 0} \overset{\mathcal{D}}{\Rightarrow} (Z_t)_{t \geq 0},
\end{equation*}
where \( (Z_t)_{t \geq 0} \) is a so-called Ornstein-Uhlenbeck process with infinitesimal mean and variance given by \( \mu(z) = -z \) and \( \sigma^2(x) = 1 \) respectively (see e.g.\ pp.\ 170--173 in~\cite{kt}). Given \( z \in \mathbb{R} \), let \( \tau_z \) denote the first time \( t \geq 0 \) at which \( Z_t = z\), given that \( Z_0 \) is chosen according to the stationary distribution of \( (Z_t)_{t \geq 0} \).
By Corollary 1 in~\cite{rs} (see also~\cite{crs}), when \( z>0 \) is large, we have \(  \mathbb{E} [\tau_z] \sim 1/\hat h(z)\) and \( \Var (\tau_z) \sim 1/\hat h(z)^2\), where \( \hat h(z) = z \exp(-z^2/2) /\sqrt{2 \pi}\). By the Paley-Zygmund inequality, this implies that for any finite time \( t > 0 \),  \( \lim_{z \to \infty} P(\tau_{z} > t) = 1 \). This implies in particular that \( (h_n)_{n \geq 1} \) is lame whenever \( a_n \to \infty \), completing the proof of (c).
\end{proof}

\begin{proof}[Proof of (d)] 
    \renewcommand{\qedsymbol}{}
By Proposition 1.19 in~\cite{js2006}, for each \( n \geq 1 \) we have
\begin{equation*}
    \mathbb{E}[C_n(h_n)] = \sum_{i=1}^n I_i^{(p_n)}(h_n) 
\end{equation*}
where \( I_i^{(p_n)}(h_n) \) is the so-called influence of the \( i \)th bit on \( h_n \) at \( p_n \), defined as the probability that resampling the \( i\)th bit of \( X_0^{(n)} \) according to \( (1-p_n)\delta_0 + p_n \delta_1 \) changes the value of \( h_n(X_0^{(n)}) \). Using this result, we obtain
\begin{align*}
    &\mathbb{E}[C_n(h_n)] = nI_1^{(p_n)}(h_n) = n \Biggl( P\bigl(\|X_0^{(n)} \|=H_n-1\bigr) \cdot \frac{n-(H_n-1)}{n} \cdot p_n + P\bigl(\|X_0^{(n)} \|=H_n\bigr) \cdot \frac{H_n}{n} \cdot (1-p_n) \Biggr)
    \\&\qquad =n \Biggl( \binom{n}{H_n-1} p_n^{H_n-1}(1-p_n)^{n-H_n+1}  \cdot \frac{n-(H_n-1)}{n} \cdot p_n + \binom{n}{H_n} p_n^{H_n}(1-p_n)^{n-H_n}  \cdot \frac{H_n}{n} \cdot (1-p_n) \Biggr)
    \\&\qquad= 2H_n   \binom{n}{H_n} p_n^{H_n}(1-p_n)^{n-H_n+1}.
\end{align*}  
Using Stirling's formula, it follows that
\begin{align*}
    &\mathbb{E}[C_n(h_n)]   
    \sim 2 \sqrt{np_n} \cdot    \frac{e^{-a_n^2}}{\sqrt{2 \pi}}.
\end{align*} 
In particular, if \( \sqrt{np_n}e^{-a_n^2} \to \infty \), then \(\lim_{n \to \infty}\mathbb{E}[C_n(h_n)] = \infty \). This completes the proof of (d).
\end{proof}
\end{proof}

We are now ready to give a proof of our main result.

\begin{proof}[Proof of Theorem~\ref{theorem: first result}]
Fix some \( r \geq 2 \) and sequences \( \mathcal{N} \), \( (\ell_n)_{n \in \mathcal{N}} \), \( (k_n)_{n \in \mathcal{N}} \) and \( (a_n)_{n \geq 1} \) so that the assumptions of Lemmas~\ref{lemma: gn} and Lemma~\ref{lemma: hn} both hold. For \( n \in \mathcal{N} \) and \( x \in \{ 0,1 \}^n \), let  \( H_n \coloneqq np_n + a_n\sqrt{np_n(1-p_n)} \) and define 
\begin{equation}
    f_n(x) \coloneqq  g_n(x)I(\|x\| < H_n-1) + I(\|x \| \geq H_n) = g_n(x)I(\|x\| < H_n-1) + h_n(x).
\end{equation}
Note that \( f_n (x)\) and \( h_n (x)\) agree whenever  \( \| x \| \geq H_n-1 \).
Combining Lemma~\ref{lemma: gn} and Lemma~\ref{lemma: hn},  the desired conclusion now immediately follows. 
\end{proof}

\section*{Acknowledgements}

The author would like to thank the anonymous referee for several useful comments.
The author acknowledges support from the European Research Council, Grant Agreement No. 682537.

\end{document}